\definecolor{glamour}{rgb}{1, 0, 0.5}
\newtheorem{theorem}{Theorem}
\newtheorem{proop}{Proposition}
\newtheorem{lemma}{Lemma}
\theoremstyle{remark}
\newtheorem*{rem}{Remark}
\def\XXint#1#2#3{{\setbox0=\hbox{$#1{#2#3}{\int}$ }
\vcenter{\hbox{$#2#3$ }}\kern-.6\wd0}}
\newcommand{\vertiii}[1]{{\left\vert\kern-0.25ex\left\vert\kern-0.25ex\left\vert #1 
    \right\vert\kern-0.25ex\right\vert\kern-0.25ex\right\vert}}
\begin{document}

\mbox{}
\nocite{*}
\title[
Zero sets of harmonic polynomials.
]{
On the zero sets of  harmonic  polynomials.
}
\author{Ioann Vasilyev}
\address{St.-Petersburg Department of V.A. Steklov Mathematical Institute, Russian Academy of Sciences (PDMI RAS), Fontanka 27, St.-Petersburg, 191023, Russia
}
\email{ivasilyev@pdmi.ras.ru, ioann.vasilyev@cyu.fr}
\subjclass[2010]{31B05, 31C05, 31A05}
\keywords{Harmonic polynomials, harmonic functions, harmonic morphisms, zero sets.}

\begin{abstract}
In this paper we consider nonzero harmonic functions vanishing on some subsets of $\mathbb R^n$. We give a positive solution to  Problem 151 from the Scottish Book posed by R. Wavre in 1936. In more detail, we construct a nonzero harmonic polynomial that vanishes on the edges of the unit cube. Moreover, using harmonic morphisms we build  new nontrivial families of harmonic polynomials that vanish at the same set in the unit ball in $\mathbb R^n$ for all $n\geq 4$. This extends certain results by Logunov and Malinnikova from~\cite{logunov2015ratios}. We also present new results on  harmonic functions in the space whose zero sets are unions of affine codimension two subspaces.
\end{abstract}

\maketitle
\section{Introduction}

Let $d$ be a natural number and let $\Omega\subset \mathbb R^d$ be a domain. Throughout this paper we denote by $\mathcal H(\Omega)$  the space of harmonic functions in $\Omega$. Recall that a function is called harmonic in a domain if its Laplacian is equal to zero everywhere in this domain.
 We call a subset $S\subset \Omega$ a uniqueness set for the space $\mathcal H(\Omega)$ if 
$$f|_S=0, f\in \mathcal H(\Omega) \text{ \;\; implies \;\; } f(x)=0 \text{ \;\; for all } x\in \Omega.$$
In this paper we  focus only on  uniqueness sets for the space $\mathcal H(\mathbb R^d)$.

\begin{rem}
	All harmonic functions considered in this paper are harmonic in the whole space, unless we specify the contrary.
\end{rem}

If a harmonic function vanishes on a nonempty open set, then by analyticity it must vanish identically. On the other hand, the structure of zero sets of harmonic functions can be very complicated beyond this trivial case. 
Some examples of nontrivial sets of uniqueness for $\mathcal H(\mathbb R^d)$ are given in~\cite{kokurin2015sets}. These examples are given by non-analytic curves. A general description of uniqueness sets for the space $\mathcal H(\mathbb R^d)$ seems to be out of reach even if one restricts the considerations  to those sets that are finite unions of affine codimension two subspaces of $\mathbb R^d$.

In the planar case the corresponding question admits a simple and explicit solution. Indeed, let $E=\{p_j\}_{j=1}^N$ be a finite set with $p_j\in \mathbb C$. Then the polynomial defined for $z\in \mathbb C$ by the following formula
$$P_E(z):=\prod_{j=1}^N(z-p_j)$$
is holomorphic, nonzero and vanishes at the set $E$. This means that the polynomial $\Re(P_E)$ is hence the desired harmonic function.

Recall the classical Problem 151 from the Scottish Book, posed by Rolin Wavre, see~\cite{mauldin2015scottish}.

\bigskip

\begin{quote}
`November 6, 1936. Prize: A ``fondue'' in Geneva. Original manuscript in French.

\bigskip

Does there exist a harmonic function defined in a region which contains a cube
in its interior, which vanishes on all the edges of the cube ? One does not consider
$f=0$.'
\label{quote:one}
\end{quote}
\bigskip

In this paper we construct such a function. More precisely, we have the following theorem.

\begin{theorem}
	\label{labell}
For any natural number $d\geq 3$, there exists a nonzero harmonic polynomial in $\mathbb R^d$ that vanishes on the $d-2$ skeleton of the unit cube $Q_d$.
	\end{theorem}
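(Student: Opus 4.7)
The plan is to give an explicit separation-of-variables construction. Define
\[
f(x_1,\ldots,x_d) := \sinh\!\bigl(\pi\sqrt{d-1}\,(x_d - \tfrac{1}{2})\bigr)\,\prod_{i=1}^{d-1}\sin(\pi x_i),
\]
an entire real-analytic function on $\mathbb R^d$.

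First I would verify that $f$ is harmonic. This is immediate from the eigenfunction identities $\partial_i^2\sin(\pi x_i) = -\pi^2\sin(\pi x_i)$ for $i \leq d-1$ and $\partial_d^2 \sinh\!\bigl(\pi\sqrt{d-1}\,(x_d - 1/2)\bigr) = (d-1)\pi^2 \sinh\!\bigl(\pi\sqrt{d-1}\,(x_d - 1/2)\bigr)$, which together give
\[
\Delta f = \bigl(-(d-1)\pi^2 + (d-1)\pi^2\bigr)\,f \equiv 0.
\]
Clearly $f$ is not identically zero; for instance $f(1/2,\ldots,1/2,0) = \sinh(-\pi\sqrt{d-1}/2) \neq 0$.

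Second, I would verify vanishing on the $(d-2)$-skeleton. Every $(d-2)$-dimensional face of $Q_d$ is parameterised by fixing two distinct coordinates $x_p = \alpha$ and $x_q = \beta$ with $1 \leq p < q \leq d$ and $\alpha, \beta \in \{0,1\}$. Since $p < q \leq d$ forces $p \leq d-1$, the factor $\sin(\pi x_p) = \sin(\pi\alpha)$ vanishes identically on the face, and hence so does $f$. Taking the union over all such faces, $f \equiv 0$ on the entire $(d-2)$-skeleton.

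There is essentially no serious obstacle in this approach. The construction exploits the product structure of $Q_d$: harmonicity is arranged by balancing the $d-1$ oscillatory sine factors (each contributing $-\pi^2$ to the eigenvalue) against a single $\sinh$ factor with frequency tuned to $\pi\sqrt{d-1}$ (contributing $+(d-1)\pi^2$). The key combinatorial observation is that any codimension-two face must have at least one of its two fixed indices in $\{1,\ldots,d-1\}$, which holds automatically because at most one of them can equal $d$. Thus the asymmetric distribution of $\sin$ versus $\sinh$ across the $d$ coordinates is compatible with vanishing on the whole $(d-2)$-skeleton.
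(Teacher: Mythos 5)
Your construction is correct, and it proves the theorem as stated, but by a genuinely different route from the paper. The paper exhibits a harmonic \emph{polynomial}, $f_d(x)=\prod_{1\leqslant i<j\leqslant d}(x_i^2-x_j^2)$, whose harmonicity is established via a Vandermonde-type second-order identity (K\"onig's lemma), and which vanishes on the skeleton because every point of a codimension-two face has two coordinates of equal absolute value. You instead use a separated eigenfunction product, balancing $d-1$ sine factors against one $\sinh$ factor; harmonicity is then a one-line computation, and the vanishing rests on the simple combinatorial fact that any codimension-two face fixes at least one of the first $d-1$ coordinates at an extreme value. Your verification of each step (harmonicity, nonvanishing at an interior point, and vanishing on every $(d-2)$-face, hence on the whole skeleton since faces of lower dimension are contained in these) is sound. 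Two small remarks. First, the paper normalizes $Q_d=[-1/2,1/2]^d$ while your function is adapted to $[0,1]^d$; this is repaired by a translation, e.g.\ replacing $\sin(\pi x_i)$ by $\cos(\pi x_i)$ and $\sinh\bigl(\pi\sqrt{d-1}\,(x_d-1/2)\bigr)$ by $\sinh\bigl(\pi\sqrt{d-1}\,x_d\bigr)$. Second, the paper itself records your family $\psi(x)=\sin(a_1x_1)\cdots\sin(a_{d-1}x_{d-1})\sinh(a_dx_d)$, $a_d^2=a_1^2+\cdots+a_{d-1}^2$, in a remark in the Appendix (crediting Logunov--Malinnikova), but applies it only to the boundary of a prism over a square; your observation that it already handles the full $(d-2)$-skeleton of the cube is the new ingredient. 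What the paper's approach buys over yours is that its answer to Wavre's problem is a (homogeneous, hyperoctahedrally symmetric) polynomial, in keeping with the paper's title and its broader study of zero sets of harmonic polynomials; what yours buys is a shorter, entirely self-contained and elementary argument.
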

	For $0\leq k \leq d$ we denote by $S_{k,d}$ the $k$-skeleton of the unit cube $Q_d:=[-1/2,1/2]^d$.
By the $k$-skeleton we mean the union of $k$-dimensional faces
of the unit cube, that is to say the set of all points that belong to the boundary of the cube and that have at least $d-k$ coordinates equal either to $-1/2$ or to $1/2$.

	We emphasize  that the description of the zero and the nodal sets of harmonic functions is a very interesting classical topic that remains popular to this day. Probably, the first questions in this domain date at least to the second half of XIX-th century from the classical work of Maxwell see~\cite{maxwell1873treatise}. This work is related to the Maxwell conjecture which still remains open, see~\cite{zolotov2023upper} for recent important progress on this conjecture.  For other recent advances in the study of the zero and nodal sets of harmonic functions and other solutions of partial differential equations we cite for example the papers~\cite{mangoubi2013gradient},~\cite{logunov2015ratios},~\cite{logunov2016ratios},~\cite{greilhuber2024quadratic},~\cite{gichev2009some},~\cite{de2010geometric} and~\cite{de2020weighted} to name a few.
	
\medskip

We are now in a position to show how to construct the desired harmonic polynomial in Theorem~\ref{labell}. Consider the  function defined for  $x=(x_1,\ldots,x_d)\in \mathbb R^d$ by the following  formula
$$
f_d(x_1,\ldots,x_d):=\prod_{1\leq i<j\leq d}(x_i^2-x_j^2).
$$
We remark that the function $f_d$ resembles the $d$-variate Vandermonde polynomial, see e.g.~\cite{macdonald1998symmetric}.
\begin{rem}
	Notice that the polynomial $f_d$ defined just above is not the unique nontrivial function that satisfies the assumptions of  Theorem~\ref{labell}. Another function that works is defined for $x\in \mathbb R^d$ by
	$$
g_d(x):=x_1\cdot\ldots \cdot x_d \cdot \prod_{1\leq i<j\leq d}(x_i^2-x_j^2).
$$
\end{rem}

We do not know a complete classification of all harmonic functions that vanish at the edges of the unit cube.

\begin{rem}
	Observe that the function $h$ defined for $x\in \mathbb R^d$ by
	$$h(x):=e^{x_1}\cdot \ldots \cdot e^{x_d} \prod_{1\leq i<j\leq d}(x_i-x_j)$$ 
	satisfies 
	$$
\Delta h(x)= d h(x),
$$
which means that the function  $h$ is a non-identically zero eigenfunction of the Laplacian on $\mathbb R^d$.
\end{rem}

Here is the main technical auxiliary result of the current paper.
\begin{lemma}
\label{lem1}
Let $f_d$ be as above and let $d\geq 2$ be a natural number. We have
$$\Delta f_d(x)=0,$$
for all $x\in \mathbb R^d$, where $\Delta=\sum^d_{k=1} \partial^2/\partial x_k^2$ denotes the Laplacian operator.
\end{lemma}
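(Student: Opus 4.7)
My plan is to avoid any direct calculation of the Laplacian and instead exploit the symmetries of $f_d$. The key observation is that $f_d$ is an antisymmetric polynomial of relatively low degree with extra reflection symmetry; together these constraints will force $\Delta f_d$ to vanish for purely dimensional reasons.

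First I would record two symmetry properties of $f_d$. Under the transposition $x_i \leftrightarrow x_j$, the single factor $(x_i^2 - x_j^2)$ changes sign while the other factors are permuted among themselves; hence $f_d$ is alternating, that is, $f_d(\sigma(x)) = \mathrm{sgn}(\sigma)\, f_d(x)$ for every permutation $\sigma \in S_d$. Since $f_d$ depends on the coordinates only through $x_1^2, \ldots, x_d^2$, it is invariant under each sign change $x_i \mapsto -x_i$. The operator $\Delta$ commutes with permutations of coordinates and with each reflection $x_i \mapsto -x_i$; therefore $\Delta f_d$ inherits both symmetries.

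Next I would run a divisibility argument. Because $\Delta f_d$ is alternating, it vanishes on each hyperplane $\{x_i = x_j\}$, so $(x_i - x_j)$ divides $\Delta f_d$ for every $i < j$. Combining this with the invariance under $x_i \mapsto -x_i$, the vanishing on $\{x_i = x_j\}$ propagates to vanishing on $\{x_i = -x_j\}$, so $(x_i + x_j)$ also divides $\Delta f_d$. The linear forms $\{x_i - x_j,\, x_i + x_j : 1 \leqslant i < j \leqslant d\}$ are pairwise coprime irreducible polynomials in $\mathbb{R}[x_1, \ldots, x_d]$, so their product $\prod_{i < j}(x_i^2 - x_j^2) = f_d$ divides $\Delta f_d$.

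The proof then closes on a degree count: $f_d$ has degree $d(d-1)$, whereas $\Delta f_d$ has degree at most $d(d-1) - 2$. A nonzero polynomial divisible by $f_d$ must have degree at least $d(d-1)$, so $\Delta f_d = 0$ identically. The only real obstacle is to argue carefully that the alternating property plus sign-invariance forces divisibility by each quadratic factor $x_i^2 - x_j^2$, and that the various factors are coprime so that their divisibilities combine — but both points are standard, and the argument mirrors the classical fact that the Vandermonde polynomial $\prod_{i<j}(x_i - x_j)$, being the antisymmetric polynomial of minimal degree, is itself harmonic.
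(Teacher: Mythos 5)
Your proof is correct, and it takes a genuinely different route from the paper. The paper computes: it writes $f_d(x)=V(x_1^2,\dots,x_d^2)$ with $V$ the Vandermonde polynomial, applies the chain rule to express $\Delta f_d$ in terms of $\sum_i y_i\,\partial_i^2 V$ and $\sum_i \partial_i V$, and then invokes a cited identity of K\"onig (the Vandermonde is annihilated by $\sum_i(y_i^2+ay_i+b)\partial_i^2+c\sum_i\partial_i$ for all constants $a,b,c$) to conclude that this combination vanishes. Your argument is instead structural and self-contained: $f_d$ is alternating and even in each variable, $\Delta$ commutes with the orthogonal maps realizing these symmetries, so $\Delta f_d$ vanishes on every hyperplane $\{x_i=\pm x_j\}$; since the $d(d-1)$ linear forms $x_i\pm x_j$ ($i<j$) are pairwise non-proportional irreducibles in the UFD $\mathbb R[x_1,\dots,x_d]$, their product $f_d$ divides $\Delta f_d$, which has degree at most $d(d-1)-2<\deg f_d$, forcing $\Delta f_d=0$. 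All the individual steps (alternation of $V(x_1^2,\dots,x_d^2)$, propagation of vanishing from $\{x_i=x_j\}$ to $\{x_i=-x_j\}$ via evenness, coprimality, and the degree count) check out. What your approach buys is independence from the external lemma and zero differentiation; what the paper's approach buys is brevity modulo the reference, and it places $f_d$ inside a broader family of polynomials annihilated by Calogero--Sutherland-type operators. Both are legitimate; yours is arguably the cleaner proof of the lemma as stated.
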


Indeed, the proof of Theorem~\ref{labell} boils down to that of Lemma~\ref{lem1}, since for each $d-2$ dimensional face $e$ of the skeleton $S_{d-2,d}$ of the unit cube of dimension $d\geq 3$ and  each point $x\in e$ one has either $x_k=x_l=\pm 1/2$ or $x_k=-x_l=\pm 1/2$ for some pair of coordinates $x_k, x_l$ with $(k,l)\in [1,\ldots, d]^2$. This  means that $x_k^2-x_l^2=0$ which in turn implies that  $f|_e=0$ and hence Theorem~\ref{labell} follows from Lemma~\ref{lem1}.


 We shall not give proofs of the statements contained in the two remarks just above Lemma~\ref{lem1}, since they are essentially the same as that of Theorem~\ref{labell}.

\medskip

Besides the single-valued harmonic polynomial our construction can be used to produce a family of multivalued harmonic functions in the sense of Wavre's `fonctions de passage', see~\cite{MR1556876} for definition. Because each edge of the cube skeleton determines an independent loop in the complement, one can assign arbitrary additive jumps (monodromy) to these loops and, by integrating the corresponding harmonic $1$-forms, obtain  multivalued harmonic functions defined on the compliment of the hypercube $d-2$ skeleton. This provides a highly symmetric model of the kind of multivalued harmonic maps considered in recent works by Donaldson~\cite{donaldson2025twistor} and Yan~\cite{yan2025construction}. The difference is that here one can prescribe independent monodromy parameters rather than a single constant monodromy.

\medskip

We now mention a question with connection to  Theorem~\ref{labell} which we leave open. Given a finite union $U$ of \textit{affine} codimension two subspaces in $\mathbb R^d$, under which condition on $U$ does there exist a nonzero harmonic function $F$ such that $F|_U=0$ ? Note that this question is consistent with the main results of the current paper since a  harmonic function that vanishes on an edge of a cube (which is a segment) should actually vanish on the whole line containing this edge. In effect, this fact holds in a more general setting of real analytic functions (harmonic functions are well known to be real analytic see~\cite{axler2013harmonic}). This means that a real analytic function that vanishes at a relatively open subset of an analytic subvariety, should vanish at the whole subvariety see e.g.~\cite{enciso2013some}. In the paper~\cite{flatto1966level} two particular cases of this question have been considered. Namely the case of linear one dimensional subspaces and that of parallel lines, both in the plane.  

\bigskip

Another goal of this paper is to construct   new families of harmonic polynomials that vanish at the same set in the unit ball in $\mathbb R^n$ for all $n\geq 4$. To do this, we use the theory of harmonic morphisms. 
\begin{theorem}
	\label{labell2}
In each space $\mathbb R^{2k}, k\geq 2$ there exist  irreducible nondegenerate quadratic homogeneous harmonic polynomials  that divide infinitely many linearly independent harmonic polynomials.

In each space $\mathbb R^{2k+1}, k\geq 2$ there exist irreducible nondegenerate  harmonic polynomials that divide infinitely many linearly independent harmonic polynomials.
\end{theorem}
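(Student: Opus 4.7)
The plan is to construct the required polynomial $P$ as (the real part of, or the first component of) a polynomial harmonic morphism $\phi \colon \mathbb R^n \to \mathbb R^m$ and to harvest the infinite family of harmonic multiples of $P$ directly from the algebraic structure of $\phi$.

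For the even case $\mathbb R^{2k}$ with $k \geq 2$, I would identify $\mathbb R^{2k} = \mathbb C^k$ with complex coordinates $z_j = x_j + iy_j$ and set
\[
\phi(z) := z_1^2 + z_2^2 + \cdots + z_k^2.
\]
Holomorphy gives $\Delta \phi = 4 \sum_j \partial_{z_j} \partial_{\bar z_j} \phi = 0$, so $P := \mathrm{Re}\,\phi = \sum_j(x_j^2 - y_j^2)$ and $Q := \mathrm{Im}\,\phi = 2\sum_j x_j y_j$ are both harmonic. The matrix of $P$ is diagonal with entries $\pm 1$, so $P$ is nondegenerate; since a nondegenerate quadratic form in $\geq 3$ variables is irreducible over $\mathbb R$ and $2k \geq 4$, $P$ is irreducible. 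Every power $\phi^n$ is again holomorphic, hence harmonic, and the binomial expansion of $\phi^n = (P+iQ)^n$ yields
\[
\mathrm{Re}(\phi^n) = \sum_{m=0}^{\lfloor n/2 \rfloor} \binom{n}{2m}(-1)^m P^{n-2m} Q^{2m}, \qquad \mathrm{Im}(\phi^n) = \sum_{m=0}^{\lfloor (n-1)/2 \rfloor} \binom{n}{2m+1}(-1)^m P^{n-2m-1} Q^{2m+1}.
\]
For odd $n$ every term of $\mathrm{Re}(\phi^n)$ carries a positive power of $P$, hence $P \mid \mathrm{Re}(\phi^n)$; for even $n$ every term of $\mathrm{Im}(\phi^n)$ does, hence $P \mid \mathrm{Im}(\phi^n)$. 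Alternating between the two parities as $n = 1, 2, 3, \ldots$ produces harmonic polynomials of degrees $2, 4, 6, \ldots$ divisible by $P$, and they are linearly independent by degree.

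For the odd case $\mathbb R^{2k+1}$ with $k \geq 3$ the absence of a global complex structure forces a different route: I would take a polynomial harmonic morphism $\psi \colon \mathbb R^{2k+1} \to \mathbb R^m$ with $m \geq 3$, built from a Clifford system or an isoparametric Cartan–M\"unzner polynomial (the lower bound $k \geq 3$ reflects the dimensional constraints of these constructions). Writing $\psi = (\psi_1, \ldots, \psi_m)$ and $P := \psi_1$, horizontal conformality gives $\nabla \psi_i \cdot \nabla \psi_j = \lambda^2 \delta_{ij}$, so in particular the truncated map $(\psi_2, \ldots, \psi_m) \colon \mathbb R^{2k+1} \to \mathbb R^{m-1}$ is itself a harmonic morphism. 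For any harmonic polynomial $g$ on $\mathbb R^{m-1}$ the pullback $f := g(\psi_2, \ldots, \psi_m)$ is therefore harmonic, and orthogonality forces $\nabla P \cdot \nabla f = 0$; combining these one finds
\[
\Delta(Pf) = 2\nabla P \cdot \nabla f + P \Delta f = 0,
\]
so $Pf$ is a harmonic polynomial divisible by $P$. Because $m-1 \geq 2$ there are infinitely many linearly independent harmonic $g$ on $\mathbb R^{m-1}$, and (provided $(\psi_2, \ldots, \psi_m)$ has Zariski-dense image) their pullbacks give infinitely many linearly independent $Pf$.

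The main obstacle, which does not arise in the even case where holomorphy does all the work, lies in the odd-dimensional construction: one must exhibit a polynomial harmonic morphism $\psi$ in odd dimension whose first component $\psi_1$ is simultaneously irreducible and nondegenerate. Quadratic harmonic morphisms $\mathbb R^n \to \mathbb R^m$ with $m \geq 3$ are governed by symmetric representations of the Clifford algebra $Cl(m-1)$ and require $n$ to be divisible by a specific power of $2$, ruling out odd $n$, so a higher-degree isoparametric polynomial is unavoidable. Verifying irreducibility and nondegeneracy of its chosen component is the technical heart of the odd-dimensional case, and is the reason why the statement is restricted to $k \geq 3$.
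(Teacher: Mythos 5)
Your even-dimensional argument is essentially the paper's own proof: the paper takes the harmonic morphism $\phi=(\phi_1,\phi_2):\mathbb R^{2n}\to\mathbb R^2$ with $\phi_1=\sum(x_{2j-1}^2-x_{2j}^2)$, $\phi_2=2\sum x_{2j-1}x_{2j}$ (i.e.\ your $z_1^2+\dots+z_k^2$ under the identification $\mathbb R^{2k}=\mathbb C^k$) and uses $\Re\bigl((\phi_1+i\phi_2)^{2k+1}\bigr)$, each of whose terms carries a factor of $\phi_1$; your additional use of $\Im(\phi^n)$ for even $n$ is a harmless variant. Your observation that irreducibility requires $k\geq 2$ (since $x_1^2-x_2^2$ factors in $\mathbb R^2$) is correct and is in fact a point the paper's statement glosses over.

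The odd-dimensional case, however, contains a genuine gap, and you have located it yourself without closing it. Your scheme requires a polynomial harmonic morphism $\psi:\mathbb R^{2k+1}\to\mathbb R^m$ with $m\geq 3$, but you never exhibit one. As you note, the quadratic such morphisms are governed by Clifford systems and force the source dimension to be even (indeed a multiple of a power of $2$), so none exists on odd-dimensional space; and the higher-degree candidates you invoke are not available either: a Cartan--M\"unzner isoparametric polynomial $F$ of degree $g$ maps to $\mathbb R$, not to $\mathbb R^m$ with $m\geq 3$, satisfies $\Delta F=c\,r^{g-2}$ with $c\neq 0$ unless the two multiplicities coincide, and even when harmonic is not horizontally weakly conformal onto a Euclidean target of dimension $\geq 3$. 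So the class of objects on which your odd-dimensional construction rests is not known to be nonempty, and the "technical heart" you defer is precisely the existence statement the proof needs. (The divisibility computation itself, $\Delta(Pf)=2\nabla P\cdot\nabla f+P\Delta f=0$ using horizontal conformality, would be fine if such a $\psi$ existed.)

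The paper avoids this obstruction entirely by staying with target $\mathbb R^2$: following Example 5.2.7 of Baird--Wood it takes $\varphi_1+i\varphi_2=\xi_1(z)x_1+\dots+\xi_{m-2}(z)x_{m-2}$ with $z=x_{m-1}+ix_m$ and $\sum_j\xi_j(z)^2\equiv 0$ (the isotropy being arranged by the explicit ansatz $\xi=(1-g^2,\,i(1+g^2),\,-2g)$), which is a harmonic morphism to $\mathbb C$ for every odd $m\geq 5$. It then reuses the same odd-power real-part trick as in the even case. The moral difference is that morphisms to $\mathbb R^2$ are abundant (they only need horizontal weak conformality, i.e.\ an isotropic gradient condition), whereas morphisms to $\mathbb R^m$, $m\geq 3$, are rigid; rerouting your odd case through a $\mathbb C$-valued morphism of this type would repair the argument.
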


By a nondegenerate polynomial we mean any polynomial $P$ whose  gradient does not lie in any proper coordinate subspace, i.e. for every nonzero vector $\omega\in \mathbb R^n$ there exists $x\in \mathbb R^n$ such that $\omega.\nabla P(x)\neq 0.$ We also prefer to explicitly say that by an irreducible polynomial we mean an irreducible polynomial over $\mathbb C$.

Observe that our Theorem~\ref{labell2} extends results by Logunov and Malinnikova from~\cite{logunov2015ratios}, Section 4. Indeed, in~\cite{logunov2015ratios} the authors constructed such a family of polynomials in dimension four using a different idea that axially-symmetric harmonic functions in  $\mathbb R^4$ can be reduced to ordinary harmonic functions of two variables, see~\cite{khavinson1991reflection}. The authors also construct in~\cite{logunov2015ratios} similar  examples of harmonic functions using closely related ideas in dimensions six and nine. However, in the language of Fuglede's harmonic morphism theory, see~\cite{fuglede1978harmonic}, the Logunov--Malinnikova constructions in Section 4 of their paper are examples of $h$-harmonic morphisms. These are maps that pull back every harmonic function on the target space to a function satisfying $\mathrm{div}(h\nabla u)=0$ on the source space. For instance, in $\mathbb{R}^9$ one can choose $h$ in the following manner $h(x)=\rho_1\rho_2\rho_3$, where  $\rho_j= \sqrt{x_j^2+x_{j+1}^2+x_{j+2}^2}$ for $j=1,4,7$. By contrast, the families of harmonic polynomials constructed in the second part of our paper arise from ordinary harmonic  morphisms from $\mathbb R^n$ to $\mathbb R^2$ (in Fuglede's sense, with $h\equiv 1$). Those thus pull back harmonic  functions on the target space $\mathbb R^2$ to harmonic functions on $\mathbb{R}^n$.

Here is another open question with connection to our Theorem~\ref{labell2}. This is the case where the ambient space is $\mathbb R^3$. This question was posed by M. Agranovsky~\cite{agranovsky2000problem}.

Let us stress that two parts of this paper are connected since they both handle the problem of constructing harmonic functions with prescribed vanishing sets in Euclidean spaces, combining classical explicit constructions with modern methods based on harmonic morphisms that are central in differential geometry.

\bigskip

We also obtain a geometric characterization of triangular prisms in $\mathbb{R}^3$ that support nontrivial global harmonic functions vanishing on the boundary. Here is the corresponding third main result of the current paper. 

\begin{theorem}\label{thm:prism}
Let 
\[
W=\{(0,y,z)\in\mathbb{R}^{3}\}
\]
be a vertical coordinate plane and let $P\subset W$ be a nondegenerate
triangle.  
For each side $S_j$ of $P$, let $\Pi_j\subset\mathbb{R}^{3}$ be the vertical
plane over $S_j$, i.e.
\[
\Pi_j=\{(x,y,z):\ (0,y,z)\in\operatorname{aff}(S_j)\},\qquad j=1,2,3.
\]
Let
\[
C_P:=\{(x,y,z):\ (0,y,z)\in P,\ x\in\mathbb{R}\}
\]
be the infinite prism over $P$, so that $\partial C_P=\Pi_1\cup\Pi_2\cup\Pi_3$.

Then the following are equivalent:
\begin{enumerate}
    \item There exists a nonzero harmonic function $f\in \mathcal H(\mathbb{R}^3)$ such that
    \[
    f|_{\partial C_P}=0.
    \]
    \item The triangle $P$ tiles the plane $W$ by reflections in its sides,
    i.e.\ the group generated by reflections in the three lines containing the
    sides of $P$ is discrete and its orbit of $P$ forms a tiling of $W$.
\end{enumerate}

Moreover, by the theorem of Bárány--Frankl--Maehara, the triangles with property
{\rm (2)} are, up to similarity, exactly the four with interior angles
\[
(60^\circ,60^\circ,60^\circ),\quad
(30^\circ,30^\circ,120^\circ),\quad
(30^\circ,60^\circ,90^\circ),\quad
(45^\circ,45^\circ,90^\circ).
\]
\end{theorem}

Consider a finite family $A=(A_j)_{1\leq j\leq m}$ of affine hyperplanes in $\mathbb R^d$. The behavior of global harmonic functions that vanish on the union 
$\cup_j A_j$ is controlled by three distinct analytic mechanisms, which interact in subtle ways.

\begin{itemize}

\item Geometric constraint.
If $A$ has a polyhedral chamber (a bounded convex polytope cut out by the hyperplanes), then one may attempt to construct harmonic functions by solving Dirichlet problems in that chamber. In this regime, the boundary geometry of the chamber plays an important role.

\item Iterated reflections.
Starting with a harmonic function in a single chamber, one can extend it across any hyperplane  on which it vanishes by  the Schwarz reflection principle.
If the resulting reflected hyperplanes accumulate in many directions (e.g. if successive reflections generate arbitrarily small angles) analyticity forces the extended function to vanish identically. This mechanism explains the `rigidity' or uniqueness phenomena that occur in many non-symmetric arrangements.

\item Symmetry.
In contrast, for arrangements with a high degree of symmetry (most notably those arising from Coxeter groups), the iterated Schwarz reflections generate a discrete group, and one obtains nontrivial global harmonic functions vanishing on all hyperplanes in the arrangement. Classical examples include the coordinate hyperplanes, dihedral arrangements in the plane, and the prisms over reflection-tiling triangles appearing in Theorem~\ref{thm:prism}.
\end{itemize}

These three mechanisms  form the analytic framework behind the uniqueness/ nonuniqueness behavior for hyperplane arrangements. A full classification of arrangements for which non-identically zero global harmonic functions vanish on $A$ seems to require a deeper analysis of these reflection dynamics and lies beyond the scope of this paper.


\subsection*{Acknowledgments}
The author is deeply grateful to Sergey Kislyakov and to Mikhail Vasilyev for a number of helpful discussions.

\section{Proof of Lemma~\ref{lem1}}

So, we concentrate on the proof of Lemma~\ref{lem1}.

\begin{proof} 
 
There is a way to derive this result from~\cite{konig2001eigenvalues}, Lemma 3.1, but we prefer to give a self-contained new alternative proof here. To this end, let $V$ denote the Vandermonde polynomial defined by
$$
V(y):=\prod_{1\leq i<j\leq d}(y_i-y_j).
$$ 
 for all $y\in \mathbb R^d$.

We know that for all $y\in \mathbb R^d$ it holds that
\begin{multline*}
\Delta f_d(y)=\sum_{i=1}^d \frac{\partial^2}{\partial y_i^2} V(y_1^2,\ldots, y_d^2) \\
= 4\sum_{i=1}^d y_i^2\frac{\partial^2}{\partial y_i^2}V(y_1^2,\ldots, y_d^2)+2\sum_{i=1}^d \frac{\partial}{\partial y_i}V(y_1^2,\ldots, y_d^2).
\end{multline*}

Consider an auxiliary differential operator
$$L:=\sum_{j=1}^d L_j,$$
where $L_j:=2x_j\partial^2_j + \partial_j$. Notice that we now only need to show that $LV=0$ identically. By a well-known property of the Vandermonde polynomial,
$$V(x_1,\ldots,x_d)=\det[C_1,\ldots,C_d],$$
where $C_m:=(x_1^{m-1},\ldots, x_d^{m-1})^t$ are the columns of the Vandermonde matrix. It follows that
\begin{equation}
\label{lalalalal}
LV=\sum_{j=1}^d \det[C_1,\ldots, C_{j-1}, LC_j, C_{j+1},\ldots, C_d].
\end{equation}
We are now done since 
$$(2x_j \partial^2_j +\partial_j)x_j^{m-1}=(m-1)(2m-3)x_j^{m-2},$$
which means that all determinants in~\eqref{lalalalal} have either two proportional columns or a zero column and hence are equal to zero.
\end{proof}

\section{A general mechanism producing infinite families of harmonic polynomials vanishing on the same set in the unit ball}

Let us now prove Theorem~\ref{labell2}.

\begin{proof}
Let $n$ be a natural number and consider for instance the mapping $\phi=(\phi_1,\phi_2): \mathbb R^{2n}\rightarrow  \mathbb R^2$ defined by 
$$\phi_1(x):=x^2_1-x^2_2+x^2_3-x^2_4+\ldots+x_{2n-1}^2-x_{2n}^2$$ 
and 
$$ \phi_2(x):=2x_1x_2+2x_3x_4+\ldots +2x_{2n-1}x_{2n}.$$ Then, according to~\cite{ou1997quadratic}, Theorem 5.11  this mapping is a harmonic morphism. Thus the irreducible quadratic homogeneous harmonic polynomial $$\phi_1(x)=x^2_1-x^2_2+x^2_3-x^2_4+\ldots+x_{2n-1}^2-x_{2n}^2$$  that depends nontrivially on all $2n$ variables of  $\mathbb R^{2n}$  divides infinitely many linearly independent harmonic polynomials, for example $$P_k(x):=\Re(\phi_1(x)+i\phi_2(x))^{2k+1}$$ with natural $k$ will work. Let us formally prove that polynomials $P_k$ form a nontrivial family. Notice that in $\mathbb R^2$ the polynomial $\Re(u+iv)^{2k+1}$ has the following form $$\Re(u+iv)^{2k+1}=u^{2k+1} + uvT_k(u,v),$$  
where $T_k$ is a polynomial. This means that the polynomial $P_k$  has unimodular coefficients in front of the monomials  
$$x_1^{4k+2},\ldots, x_{2n}^{4k+2}$$
 and is thus nontrivial. Moreover, it holds that $\deg(P_k)=4k+2$ so the family is linearly independent.

\bigskip

Let us now turn to the case of odd dimensions $m:=2k+1$ with $k\geq 2$. For odd dimensions the standard harmonic morphisms are slightly more involved, but a convenient explicit construction is available in the literature (Baird--Wood,~\cite{baird2003harmonic}). For the reader's convenience we outline the necessary parts of this construction.
\begin{enumerate}[(i)]
\item  $\phi_1$ is a homogeneous polynomial of degree $m-3$;
\item   $\phi_1$ is irreducible and its gradient is nowhere contained in a proper coordinate subspace;
\item  $\phi$ is surjective and horizontally weakly conformal.
\end{enumerate}
For completeness we indicate the verification of (ii), although no new ideas are required.

We use the result given by Example 5.2.7 in~\cite{baird2003harmonic}. That result says that the polynomial mapping
$$(\varphi_1(x),\varphi_2(x)):=\xi_1(z)x_1+\ldots+\xi_{m-2}(z)x_{m-2},$$
where $x=(x_1,\ldots,x_m)\in \mathbb R^m$ and $z:=x_{m-1}+ix_m$ is a harmonic morphism from $\mathbb R^m$ to $\mathbb C$ once we know that
$$\sum_{j=1}^{m-2}\xi^2_j(z)=0$$
identically. In order to satisfy this last condition, we use ansatz 6.8.6 on the page 200 in~\cite{baird2003harmonic}, which gives
$$\xi:=(1-g^2,i(1+g^2),-2g),$$
where we pose $g^2:=g_1^2 + \ldots + g_{m-4}^2$ and $g:=(g_1,\ldots, g_{m-4})$ for any choice of polynomials $(g_1,\ldots, g_{m-4}).$ Thus it suffices to replace the mapping $(\phi_1,\phi_2)$ from the previous considerations by the mapping $(\varphi_1,\varphi_2)$.

The only two things that need to be clarified in this case are the nondegeneracy of $\varphi_1$ and its irreducibility. To this end we perform the following choice $$g_0(z):=(z, z^2, \ldots, z^{m-4}).$$ This choice gives rise to the following harmonic divisor
$$\varphi_{1,0}(x):=\sum_{j=1}^{m-2}x_jf_j(x_{m-1},x_m),$$
where $f_j$ are  polynomials. From here it is now easy to conclude that $\varphi_{1,0}$ is irreducible. For if not then $\varphi_{1,0}(x)=P(x_1,\ldots,x_m,z)Q(x_1,\ldots,x_m,z)$, where $P$ and $Q$ are certain polynomials. Hence, $P(0,z)Q(0,z)=0$ identically. From here we see that either $P(0,z)=0$ or $Q(0,z)=0$ identically. Observe that both can't be zero identically, since $\varphi_{1,0}(x)$ has degree $1$ in $x_j$. Without loss of generality, we suppose that $P(0,z)\equiv 0$ and $Q(0,z)\not\equiv0$. Since the polynomial $P$ is at most linear in $x_1,\ldots,x_{m-2}$ we conclude that there exist polynomials $A, \beta_0,\beta_1,\ldots, \beta_{m-2}$ all depending only on $x_{m-1}$ and $x_m$ such that
$$\varphi_{1,0}(x)=A(x_{m-1},x_m)\left(\beta_0(x_{m-1},x_m)+\sum_{j=1}^{m-2}x_j \beta_j(x_{m-1},x_m)\right).$$
Comparing coefficients, we get $A\beta_j=f_j$ for all $1\leq j\leq m-2$ and $A\beta_0=0$. Since $A$ is nonzero, we have $\beta_0=0$. Next, we see that $A$ is a common divisor of polynomials $f_j$ for all $1\leq j\leq m-2$. But, according to the construction, the polynomials  $f_j$ are coprime which follows, for instance from the facts that $f_3=2x_{m-1}$ and $f_4=x_{m-1}^2-x_{m}^2$ are coprime so any factor of $\varphi_{1,0}$ would divide both, which is impossible. Hence the factorization is trivial.

Regarding nondegeneracy, we need to show that $\omega.\nabla\varphi_{1,0}(x)=0$  for all $x\in \mathbb R^m$ implies that $\omega=0$. Writing $\omega=(a_1,\ldots,a_{m-2},b,c)$ and computing the gradient we obtain
$$\omega.\nabla\varphi_{1,0}(x)=\sum_{j=1}^{m-2}a_jf_j(z)+b\sum_{j=1}^{m-2}x_j\frac{\partial f_j}{\partial x_{m-1}}(z)+c\sum_{j=1}^{m-2}x_j \frac{\partial f_j}{\partial x_{m}}(z)=0.$$
Considering this scalar product as a polynomial of $x_1,\ldots,x_{m-2}$ we see that all its coefficients must vanish identically. This yields
$$\sum_{j=1}^{m-2}a_jf_j(z)=0$$
identically, forcing $a_j=0$ for all $1\leq j\leq m-2$ by linear independence of $f_j$. It is left to show that $b=c=0$. We remark that we must have
$$b\partial_{m-1}f_j(z)+c\partial_{m}f_j(z)=0$$
identically for all $1\leq j\leq m-2$. But this is impossible if $(b,c)\neq (0,0)$ for our explicit choice of ansatz. Indeed, checking the last line for $f_3$ gives $b=0$ and for $f_4$ gives $c=0$.

\end{proof}

Notice that the last argument works for all $m\geq 3$. However, for $m=3$ this gives no nontrivial example that we wish to construct, and for $m=4$ we have already constructed one, so we restrict our considerations above to the case where $m\geq 5$. Note also that our result extends those from the fourth section of paper~\cite{logunov2015ratios}.

There are many other harmonic morphisms between Euclidean spaces. We specifically name those based on Hopf polynomials and refer the interested reader to the book~\cite{baird2003harmonic} for this and other related constructions.

Here is one more result that provides a general way of constructing  examples of linearly independent harmonic functions vanishing at the same set in a ball via `lifting ' lower dimensional examples.

\begin{proop}
Let $\{f_j\}_{j=1}^\infty$ be a linear independent family of functions satisfying $f_j\in \mathcal H(\mathbb R^n)$ with $n\geq 3$ for each $j$ and such that there exists $\delta>0$ satisfying $Z_{f_j}\cap B(0,\delta)=Z_{f_i}\cap B(0,\delta)$ for all $(i,j)\in \mathbb N^2$, and let $\varphi: \mathbb R^m \rightarrow \mathbb R^n$ be any nonconstant harmonic morphism. Then we have
\begin{enumerate}
\item \label{one} $f_j\circ \varphi \in \mathcal H(\mathbb R^m)$ for all $j\in \mathbb N$.
\item \label{two}  $\{f_j\circ \varphi \}_j$ is a linearly independent family.
\item \label{three}  There is $\delta_1>0$ such that $Z_{f_j\circ \varphi}\cap B(0,\delta)=Z_{f_i\circ \varphi}\cap B(0,\delta)$.
\end{enumerate}
\end{proop}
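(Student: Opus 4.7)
The plan is to deduce all three items from the defining property of a harmonic morphism together with the identity principle for harmonic functions. For the composition to make sense with $\varphi:\mathbb R^m\to\mathbb R^n$ and $f_j:\mathbb R^n\to\mathbb R$, I read the symbol $\varphi\circ f_j$ in the statement as $f_j\circ\varphi:\mathbb R^m\to\mathbb R$. Item (1) is then immediate: by definition, a harmonic morphism pulls back harmonic functions to harmonic functions, so $f_j\circ\varphi\in\mathcal H(\mathbb R^m)$ for every $j$.

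For item (2) I would argue by contradiction. Assume that some nontrivial finite linear combination $\sum_j c_j(f_j\circ\varphi)$ vanishes identically on $\mathbb R^m$; then the harmonic function $F:=\sum_j c_j f_j\in\mathcal H(\mathbb R^n)$ vanishes on the entire image $\varphi(\mathbb R^m)$. The crucial input here is Fuglede's classical theorem that every non-constant harmonic morphism is an open map, so $\varphi(\mathbb R^m)$ contains a nonempty open subset of $\mathbb R^n$. By real-analyticity of harmonic functions and the identity principle on the connected set $\mathbb R^n$, this forces $F\equiv 0$, contradicting the linear independence of $\{f_j\}$.

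For item (3) I would use continuity of $\varphi$: choose a point $x_0\in \mathbb R^m$ with $\varphi(x_0)\in B(0,\delta)$ (the image is open, so such $x_0$ exists, and after a harmless translation of coordinates we may take $x_0=0$). Then there is $\delta_1>0$ with $\varphi(B(0,\delta_1))\subset B(0,\delta)$. For any $x\in B(0,\delta_1)$ one has $(f_j\circ\varphi)(x)=0$ if and only if $\varphi(x)\in Z_{f_j}\cap B(0,\delta)$; by hypothesis this last set is independent of $j$, so $Z_{f_j\circ\varphi}\cap B(0,\delta_1)$ is independent of $j$ as well, which is the desired conclusion (with $B(0,\delta_1)$ in place of the $B(0,\delta)$ appearing on the right-hand side of the statement, which I take to be a minor typo).

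The main obstacle I anticipate is the appeal to Fuglede's openness theorem for non-constant harmonic morphisms in item (2); without it, linear independence cannot be propagated through $\varphi$, as one could not rule out that $\varphi(\mathbb R^m)$ lies inside the zero set of some nontrivial harmonic polynomial. Everything else reduces to a direct unravelling of definitions and the unique continuation principle.
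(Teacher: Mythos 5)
Your proof is correct and follows essentially the same route as the paper: items (1) and (3) are unravelled from the definitions (the paper simply calls them evident), and item (2) is reduced to the fact that a harmonic function vanishing on the image of a non-constant harmonic morphism must vanish identically. The only difference is that the paper invokes surjectivity of non-constant harmonic morphisms (citing Ababou--Baird--Brossard), whereas you use Fuglede's openness theorem together with unique continuation --- both suffice, and your reading of $\varphi\circ f_j$ as $f_j\circ\varphi$ correctly fixes the paper's notational slip.
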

\begin{proof}
The first and the third properties above are evident. According to~\cite{ababou1999polynomes}, if $\varphi: \mathbb R^m \rightarrow \mathbb R^n$ is a global nonconstant harmonic morphism then it is surjective. Thus the second property follows as well. 
\end{proof}

\section{Harmonic Dirichlet data on triangular prisms}

The constructions in the first two sections rely on algebraic symmetries of the zero set, either via explicit polynomial factors or via the pullback of harmonic morphisms. It is natural to ask whether analogous phenomena exist in a purely geometric setting, where the vanishing set is not algebraic but instead arises as the boundary of an unbounded polyhedral region.
In this section we show that the same principle persists: the existence of a non-identically zero harmonic function vanishing on the boundary of a triangular prism is equivalent to a discretely generated reflection symmetry of the prism cross-section. This provides a geometric counterpart to the algebraic constructions in the earlier sections.

In this section we record a geometric criterion for the existence of nontrivial
harmonic functions on $\mathbb{R}^{3}$ vanishing on the boundary of an
unbounded triangular prism.  
The result is a three-dimensional reflection analogue of the classical
classification of planar reflection-tiling triangles due to
Bárány--Frankl--Maehara.

\begin{proof}
We first prove {\rm (1)} $\Rightarrow$ {\rm (2)}.
Assume there exists a nonzero harmonic function
$f\in \mathcal H(\mathbb{R}^{3})$ such that $f|_{\partial C_P}=0$. For each side $S_j$ of $P$, let $L_j\subset W$ be the line through $S_j$, and
let $\sigma_j$ denote the reflection in the vertical plane $\Pi_j$.
Since $f$ is harmonic near $\Pi_j$ and vanishes on $\Pi_j$, the Schwarz
reflection principle for harmonic functions implies
\begin{equation}\label{eq:odd}
    f(\sigma_j x)=-f(x), \qquad x\in\mathbb{R}^3.
\end{equation}

Let $G$ be the group generated by $\sigma_1,\sigma_2,\sigma_3$. 
Projecting to $W$, the induced action is exactly the group generated by
reflections in the lines $L_1,L_2,L_3$. By \eqref{eq:odd}, for every $g\in G$ we have
\[
f\equiv 0 \quad\text{on } g(\partial C_P).
\]

Suppose now that the triangular reflection group on $W$ is not discrete.
By a theorem of Bárány--Frankl--Maehara, the orbit of any vertex of $P$ under
reflections in $L_1,L_2,L_3$ is dense in $W$.  
Hence the union of the reflected lines $g(L_j)$ is dense in $W$, and the union
\[
\bigcup_{g\in G} g(\partial C_P)
\]
is dense in $\mathbb{R}^3$.
Since $f$ vanishes on this union and is real-analytic, it must vanish
identically, contradicting our assumption.
Thus the reflection group must be discrete, proving {\rm (2)}.

\smallskip

We now prove {\rm (2)} $\Rightarrow$ {\rm (1)}.
Assume that $P$ tiles $W$ by reflections in its sides.
Let $G$ be the generated reflection group acting on $\mathbb{R}^3$ via the
reflections $\sigma_1,\sigma_2,\sigma_3$.
Because the cross-section tiles $W$, the prisms $g(C_P)$, $g\in G$, tile
$\mathbb{R}^3$.

Choose a nonzero harmonic function $u$ on $C_P$ vanishing on the three vertical
faces. The existence of such a function can be justified by considering the product of the first Dirichlet eigenfunction of the triangle and an exponential function. Let $\lambda$ be the first Dirichlet eigenvalue and let $\phi$ be the first Dirichlet eigenfunction of the triangle $P$.
Then  the function $u(x,y,z)=\phi(y,z)e^{-\sqrt{\lambda} x}$ is harmonic in $C_P$ and vanishes on $\partial C_P$.  
Extend $u$ to all of $\mathbb{R}^{3}$ by successive Schwarz reflections.
For each generator $\sigma_j$ we set
\[
U(\sigma_j x) := -u(x).
\]
To see that this  is well defined, recall that the reflections in the
three sides of $W$ generate a planar Coxeter reflection group. In particular,
the chambers $g(C_P)$, $g\in G$, have pairwise disjoint interiors, and every
point of $\mathbb{R}^3$ lies in exactly one such chamber. Therefore, if
$g_1 x = g_2 x$ with $x \in C_P$, then necessarily $g_1 = g_2$, and the value
$U(gx)$ is uniquely determined. In addition, since all defining relations in a
reflection group have even length, the homomorphism
$\chi : G \to \{\pm1\}$ given by $\chi(\sigma_j)=-1$ is well defined, ensuring
that the signs used in the Schwarz reflections are consistent.

Call the resulting function $f$.
It is harmonic, real-analytic, antisymmetric under each $\sigma_j$, and hence
vanishes on each $\Pi_j$.
In particular, $f|_{\partial C_P}=0$.
Since $u$ is nonzero on $C_P$, the extension $f$ is nonzero on
$\mathbb{R}^{3}$. This establishes {\rm (1)}.
\end{proof}

Thanks to the work by Logunov and Malinnikova, see~\cite{logunov2015ratios} in the second statement of the result just above, in the case where a triangle is replaced by the unit square the corresponding function can be written explicitly. Indeed, it can be verified that the function
$$\psi(x):=\sin(a_1x_1)\cdot \ldots \cdot \sin(a_{d-1}x_{d-1})\sinh(a_{d}x_{d})$$
defined for all $x\in \mathbb R^d$ is harmonic, whenever real numbers $a_1,\ldots, a_d$ are such that holds $a_d^2=a_1^2+\ldots+a_{d-1}^2$. It is also clear that the function $\psi$ vanishes on the boundary of the corresponding prism.

It would be interesting to determine whether  such explicit formulas exist in the case of other $P$ and in the case of several dimensions. Another open question is to classify \textit{all} nonzero harmonic functions that vanish on the boundary of a half-strip or a half-prism.

\bibliographystyle{plain}
\bibliography{151}
\end{document}